\def \To{\longrightarrow}
\def \Hom{\operatorname{Hom}}
\def \Vec{\operatorname{Vec}}
\def \id{\operatorname{id}}
\def \Z{\mathbb{Z}}
\def \unit{\mathds{1}}
\def \o{\omega}
\def \Arf{\operatorname{Arf}}
\numberwithin{equation}{section}
\newtheorem*{theorem*}{Theorem}
\newtheorem{theorem}{Theorem}[section]
\newtheorem{lemma}[theorem]{Lemma}
\newtheorem{remark}[theorem]{Remark}
\newcommand{\cx}{\mathbb{C}}
\newcommand{\cat}[1]{\mathcal{#1}}
\def \FPdim{\operatorname{FPdim}}
\newcommand{\FSexp}{\operatorname{FSexp}}
\newcommand{\Ob}{\operatorname{Ob}}
\newcommand{\DC}[1]{Z(\cat{#1})}
\newcommand{\Rep}{\operatorname{Rep}}
\begin{document}

\title{CLASSIFICATION OF SPHERICAL FUSION CATEGORIES OF FROBENIUS-SCHUR EXPONENT 2}

\author{Zheyan Wan}

\address{School of Mathematical Sciences, USTC, Hefei 230026, China} \email{wanzy@mail.ustc.edu.cn}

\author{Yilong Wang}

\address{Department of Mathematics,
Louisiana State University,
Baton Rouge, LA 70803, USA}
\email{yilongwang@lsu.edu}

\date{}

\begin{abstract}
In this paper, we propose a new approach towards the classification of spherical fusion categories by their Frobenius-Schur exponents. We classify spherical fusion categories of Frobenius-Schur exponent 2 up to monoidal equivalence. We also classify modular categories of Frobenius-Schur exponent 2 up to braided monoidal equivalence. It turns out that the Gauss sum is a complete invariant for modular categories of Frobenius-Schur exponent 2. This result can be viewed as a categorical analog of Arf's theorem on the classification of non-degenerate quadratic forms over fields of characteristic 2.
\end{abstract}
\maketitle

\section{Introduction}
Let $ \cat{C} $ be a spherical fusion category over $ \cx $. The higher Frobenius-Schur indicators $ \nu_n(V)$ of $ V \in \Ob(\cat{C}) $ and $n \in \Z $ are generalizations of the classical Frobenius-Schur indicator for irreducible finite group representations (see \cite{NS} and the references therein). The Frobenius-Schur indicators are important invariants of a spherical fusion category, especially when the category is in addition non-degenerately braided (in other words, modular). For example, the congruence subgroup conjecture on the $\operatorname{SL}(2, \Z)$ representations arising from modular categories can be resolved using generalized Frobenius-Schur indicators \cite{NS2010}.

The Frobenius-Schur exponent of a spherical fusion category $\cat{C}$, denoted by $\FSexp(\cat{C})$, is the smallest positive integer $n$ such that $\nu_n(V) = \dim_{\cat{C}}(V)$ for any object $V \in \Ob(\cat{C})$, where $ \dim_{\cat{C}}(V) $ is the categorical dimension of $ V $ in $ \cat{C} $. It is shown in \cite{NS} that $\FSexp(\cat{C})$ is equal to the order of the T-matrix of $\DC{C}$, the Drinfeld center of $\cat{C}$. Moreover, the Cauchy theorem for spherical fusion categories asserts that the prime ideals dividing $\FSexp(\cat{C})$ and those dividing the global dimension $\dim(\cat{C})$ are the same in the ring of algebraic integers \cite{BNRW-Rank-finiteness}. It is then reasonable to pursue a classification of spherical fusion categories by their Frobenius-Schur exponents, as opposed to the usual method of classification by rank \cite{Ostrik-Rank-2, Rowell-Stong-Wang, BNRW-Classification-by-rank}.

In this paper, we give a full classification of spherical fusion categories of Frobenius-Schur exponent 2. We show that such a spherical fusion category $\cat{C}$ is equivalent, as a fusion category, to $\Rep(\Z_2^n)$ for some positive integer $n$. In particular, the associativity constraints of $\cat{C}$ are all identities. We then show that if $\cat{C}$ is in addition modular, then $\cat{C}$ can be decomposed into a Deligne tensor product of two types of modular categories called $\cat{C}(\Z_2^2, q_1)$ and $\cat{C}(\Z_2^2, q_2)$. It is worth mentioning that in \cite[Theorem 3.2]{Bruillard-Rowell-Egyptian-fractions}, the authors showed that any modular category of Frobenius-Schur exponent 2 is a braided fusion subcategory of $ \Rep(D^\o (\Z_2^{2n})) $ for some positive integer $ n $. In this paper, we completely classify these modular categories by a categorical analog of Arf's theorem on the classification of non-degenerate quadratic forms over fields of characteristic 2. It turns out, in this case, the positive Gauss sum is a complete invariant.

The paper is structured as follows. In Section 2, we give a quick review of basic concepts and set up notations for future use.
We also discuss the braided monoidal structure on the category of $G$-graded vector spaces for a finite abelian group $G$. In Section 3, we classify spherical fusion categories of Frobenius-Schur exponent 2. Finally, in Section 4, we classify modular categories of Frobenius-Schur exponent 2.

\section{Preliminaries}
\subsection{Basic concepts and notations} \hfill

Now let $ \cat{C} $ be a \emph{fusion category} over $ \cat{C} $ over $ \cx $ \cite{EGNO}. In particular, $ \cat{C} $ is rigid monoidal, $ \cx$-linear, semisimple with finitely many isomorphism classes of simple objects such that the tensor unit $ \unit\in\Ob(\cat{C}) $ is simple. We fix a choice of representatives from the isomorphism class of simple objects and denote the set of all such representatives by $ \Pi_{\cat{C}} $. The \emph{Frobenius-Perron dimension} of $V\in\Ob(\cat{C})$, denoted by $\text{FP}\dim_{\cat{C}}(V)$, is the largest non-negative eigenvalue of the fusion matrix of $ V $. We define the Frobenius-Perron dimension of $ \cat{C} $ by $\text{FP}\dim(\cat{C}):=\sum_{V\in\Pi_{\cat{C}}}\text{FP}\dim_{\cat{C}}(V)^2$.

A fusion category $ \cat{C} $ is called \emph{spherical} if it has a pivotal structure such that the left and right pivotal traces coincide on all endomorphisms. In this case, the left (or right) pivotal trace of $ \id_V $, the identity of $ V\in\Ob(\cat{C}) $, is called the \emph{categorical dimension} of $ V $. We denote the categorical dimension of $ V\in\Ob(\cat{C}) $ by $ \dim_{\cat{C}}(V) $, and we define the \emph{global dimension} $ \cat{C} $ by $\dim(\cat{C}):=\sum_{V\in\Pi_{\cat{C}}}\dim_{\cat{C}}(V)^2$.

A spherical fusion category admitting a braiding is called a braided spherical fusion category (or premodular category). A braided spherical fusion category is called \emph{modular} if the braiding is non-degenerate, or equivalently, if its S-matrix is non-degenerate \cite{Muger}. For example, $\DC{C}$, the Drinfeld center of a spherical fusion category $\cat{C}$, is modular \cite{Muger}. Objects of $\DC{C}$ are pairs $(X,\sigma_X)$, where $X\in\Ob(\cat{C})$ and $\sigma_X:X\otimes-\stackrel{\sim}{\to}-\otimes X$ is a half braiding. Since the pivotal structure of $ \DC{C} $ is inherited from $ \cat{C} $, we have
\begin{equation}\label{eq:DimComparison}
	\dim_{\DC{C}}(V,\sigma_V)=\dim_{\cat{C}}(V)
\end{equation}
for any $ V\in\Ob(\cat{C}) $.

Let $ \cat{C} $ be a spherical fusion category. For any $ n \in \Z $, and for any $ V\in\Ob(\cat{C}) $, the \emph{$ n $-th Frobenius-Schur indicator} $\nu_n$ of $ V $ is defined to be the operator trace of a linear operator $E_V^{(n)}:\Hom_{\cat{C}}(\unit, V^{\otimes n})\to \Hom_{\cat{C}}(\unit, V^{\otimes n})$ satisfying $(E_V^{(n)})^n=\text{id}$. Here, $ V^{\otimes n} $ is understood as inductively defined by $ V^{\otimes (m+1)} = V \otimes V^{\otimes m} $ for $ 1 \leq m < n $, and associativity constraints are included in the definition of $ E_V^{(n)} $, see \cite{NS-Pivotal}. In particular, if $V$ is simple, then 
\begin{equation}\label{eq:nu1}
 \nu_1(V) = \delta_{\unit, V}.
\end{equation}
We also have 
\begin{equation}\label{eq:nu2}
\nu_2(V)=0, \mbox{ if } V\not\cong V^*,\ 
\nu_2(V)=1 \mbox{ or } -1, \mbox{ if } V\cong V^*
\end{equation}
for all $ V \in \Pi_{\cat{C}} $.

The \emph{Frobenius-Schur exponent of an object} $ V $ in a spherical fusion category $ \cat{C} $, denoted by $ \FSexp(V) $, is defined to be the smallest positive integer $n$ such that $\nu_n(V) = \dim_{\cat{C}}(V)$. The \emph{Frobenius-Schur exponent of $ \cat{C} $}, denoted by $ \FSexp(\cat{C}) $, is defined to be the smallest positive integer $n$ such that $\nu_n(V) = \dim_{\cat{C}}(V)$ for all $V\in\cat{C}$ \cite{NS}. When $ \cat{C} $ is the category of finite dimensional $H$-modules for a semisimple Hopf algebra $ H $ over $ \cx $, $ \FSexp(V) $ is equal to the exponent of $V$ as a finite dimensional $ H $-module. In other words, $ \FSexp(V) $ is equal to the exponent of the image of $ G $ in $ \operatorname{GL}(V, \cx) $ \cite{KSZ}.

It is immediate from the definition and Equation (\ref{eq:nu1}) that if $ \cat{C} $ is a spherical fusion category such that $ \FSexp(\cat{C}) = 1 $, then for any $ V \in \Pi_{\cat{C}} $, $ \dim_{\cat{C}}(V) = \delta_{\unit, V}$. According to \cite[Theorem 2.3]{ENO}, $ \dim_{\cat{C}}(V) \neq 0 $ for all $ V \in \Pi_{\cat{C}} $, hence $ \cat{C} $ has the tensor unit $ \unit $ as its only simple object. Therefore, $ \cat{C} $ is monoidally equivalent to $ \Vec_{\cx} $, the category of finite dimensional vector spaces over $ \cx $.

It is worth mentioning that by \cite{NS}, for any $ V \in\Ob(\cat{C}) $, $ \FSexp(V) $ does not depend on the choice of pivotal structures. In addition, $ \FSexp(\cat{C}) $ of a spherical fusion category $ \cat{C} $ depends only on the equivalence class of the modular category $ \DC{C} $.

\subsection{\texorpdfstring{Braided monoidal structure on $ G $-graded vector spaces}{}}\label{sec:gwc}\hfill

Let $G$ be a finite multiplicative abelian group. Recall that the category $\Vec_G^{\o}$ of finite-dimensional $G$-graded vector spaces has simple objects $\{V_g|g \in G\}$ where $(V_g)_h=\delta_{g,h}\mathbb{C}, \ \forall h \in G.$ The tensor product is given by $V_g \otimes V_h=V_{gh},$ and the tensor unit is $V_1$, where $1$ is the identity of $G$. The associator is given by a normalized 3-cocycle $ \o\in Z^3(G, \cx^{\times}) $
$$
\o(x,y,z):
V_x\otimes (V_y\otimes V_z)
\To
(V_x\otimes V_y )\otimes V_z.
$$

Now we equip $\Vec_G^{\o}$ with a braiding given by a normalized 2-cochain $ c \in C^2(G, \cx^{\times}) $
$$
c(x,y):
V_x\otimes V_y
\To
V_y\otimes V_x
$$
satisfying the hexagon axioms
\begin{equation}\label{EM3cocycle}
\frac{c(xy,z)}{c(x,z)c(y,z)}\frac{\o(x,y,z)\o(z,x,y)}{\o(x,z,y)}
=
1
=
\frac{c(x,yz)}{c(x,y)c(x,z)}\frac{\o(y,x,z)}{\o(x,y,z)\o(y,z,x)}
\end{equation}
for all $x,y,z\in G$. In other words, the pair $(\o,c)$ is an \emph{Eilenberg-MacLane 3-cocycle} of $G$. Finally, we equip $ \Vec_G^{\o} $ with the canonical (spherical) pivotal structure, which is simply given by identities on objects, so that the categorical dimensions are all positive. We denote this braided spherical fusion category by $ \Vec_G^{(\o, c)} $.

An Eilenberg-MacLane 3-cocycle $(\o,c)$ is called a \emph{coboundary} if there exists a 2-cochain $h \in C^2(G, \cx^{\times})$ such that
\begin{equation}\label{EM3coboundary}
\o=\delta h
\mbox{ and }
c(x,y)=\frac{h(x,y)}{h(y,x)}.
\end{equation}
The \emph{Eilenberg-MacLane cohomology group} $H^3_{ab}(G,\mathbb{C}^{\times})$ is then defined by
$$
H^3_{ab}(G,\mathbb{C}^{\times})
=
Z^3_{ab}(G,\mathbb{C}^{\times})/B^3_{ab}(G,\mathbb{C}^{\times}),
$$
where $Z^3_{ab}(G,\mathbb{C}^{\times})$ and $B^3_{ab}(G,\mathbb{C}^{\times})$ are respectively the abelian groups of Eilenberg-MacLane 3-cocycles and 3-coboundaries.
To $(\o,c)\in Z^3_{ab}(G,\mathbb{C}^{\times})$, one can assign the function $q(x):=c(x,x)$, called its \emph{trace}. It is easy to show that $ q(x) $ is a \emph{quadratic form} (or a quadratic function). In other words, we have
\begin{enumerate}
\item
$q(x^a)=q(x)^{a^2}$ for any $a\in\mathbb{Z}$, and
\item
$b_q(x,y):=\frac{q(xy)}{q(x)q(y)}$ defines a bicharacter of $G$.
\end{enumerate}
We will use the pair $ (G, q) $ to denote a quadratic form $ q $ on the finite abelian group $ G $. When the group $ G $ is clear from the context, we will sometimes simply write $ q $. Note that given two quadratic forms $ (G, q) $ and $ (G', q') $, we can define a quadratic form on $ G \oplus G' $, denoted by $ q \oplus q' $, via the following formula:
$$
(q\oplus q')(x, x') := q(x)q'(x')
$$
for all $ (x, x') \in G \oplus G' $. The quadratic form $ (G \oplus G', q \oplus q') $ is called the \emph{direct sum} of $ (G, q) $ and $ (G', q') $.

We recall a theorem of Eilenberg-MacLane (\cite{EMa} and \cite{EMb}).

\begin{theorem*}[Eilenberg-MacLane]
The map assigning $(\o,c)$ to its trace induces an isomorphism of groups
$$
H^3_{ab}(G,\mathbb{C}^{\times})
\stackrel{\cong}{\To}Q(G,\mathbb{C}^{\times})
$$
where $Q(G,\mathbb{C}^{\times})$ is the abelian group of quadratic forms from $G$ to $\mathbb{C}^{\times}$.
\end{theorem*}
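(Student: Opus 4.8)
The plan is to verify first that the trace assignment is a well-defined group homomorphism on cohomology classes, and then to prove bijectivity of the induced map by reducing, via additivity in $ G $, to the case of cyclic groups.

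The formal properties are immediate. The group laws on $ Z^3_{ab}(G,\cx^{\times}) $ and on $ Q(G,\cx^{\times}) $ are both pointwise multiplication, and the trace is obtained from $ c $ by the multiplicative rule $ c \mapsto c(x,x) $; that the image is a quadratic form is precisely the computation recalled just before the statement. Hence the trace defines a homomorphism $ \tau\colon Z^3_{ab}(G,\cx^{\times}) \to Q(G,\cx^{\times}) $. For a coboundary $ (\delta h,\, h(x,y)/h(y,x)) $ one has $ c(x,x) = h(x,x)/h(x,x) = 1 $, so $ \tau $ annihilates $ B^3_{ab}(G,\cx^{\times}) $ and descends to $ \bar\tau\colon H^3_{ab}(G,\cx^{\times}) \to Q(G,\cx^{\times}) $. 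It remains to show $ \bar\tau $ is bijective.

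For this I would exploit the behaviour of both sides under direct sums. On the quadratic-form side there is a canonical splitting: any quadratic form on $ G \oplus G' $ restricts to quadratic forms $ q_1, q_2 $ on the two summands, and its cross term $ \beta(x,x') := b_q\big((x,1),(1,x')\big) $ is a bicharacter, yielding an isomorphism
\begin{equation*}
Q(G \oplus G', \cx^{\times}) \cong Q(G,\cx^{\times}) \oplus Q(G',\cx^{\times}) \oplus \Hom(G \otimes_{\Z} G', \cx^{\times}).
\end{equation*}
I would establish the parallel decomposition of $ H^3_{ab}(G \oplus G',\cx^{\times}) $ directly from the cochain description, the cross summand being represented by abelian $ 3 $-cocycles with trivial associator whose braiding is a bicharacter supported between the two factors, and then check that $ \bar\tau $ is the direct sum of the two maps on the factors and of an explicit map on the cross summand. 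The cross-term comparison is clean: the induced map sends such a braiding $ c $ to the bicharacter $ x \otimes x' \mapsto c(x,x')\,c(x',x) $, which is an isomorphism onto $ \Hom(G \otimes_{\Z} G', \cx^{\times}) $. Iterating the structure theorem $ G \cong \bigoplus_i \Z_{n_i} $, bijectivity of $ \bar\tau $ for all $ G $ then follows from its bijectivity on cyclic groups.

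The remaining case $ G = \Z_n $ I would treat by explicit computation. Writing $ x $ for a generator, a quadratic form is determined by $ q(x) $ subject to $ q(x)^{\gcd(2n,n^2)} = 1 $, so $ Q(\Z_n,\cx^{\times}) $ is cyclic of order $ n $ for $ n $ odd and $ 2n $ for $ n $ even. Choosing a primitive root of unity $ \zeta $ of that order and representatives $ 0 \le a,b < n $, I would exhibit the abelian $ 3 $-cocycle with braiding $ c(x^a, x^b) = \zeta^{ab} $ and the standard associator $ \o $ correcting the non-additivity of representatives, whose trace $ q(x^a) = \zeta^{a^2} $ generates $ Q(\Z_n,\cx^{\times}) $; matching of orders then forces $ \bar\tau $ to be an isomorphism. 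The main obstacle is precisely the case $ n $ even: there $ Q(\Z_n,\cx^{\times}) $ is strictly larger than the subgroup of forms realizable with a trivial associator, because $ c(x^a, x^b) = \zeta^{ab} $ fails to descend to a bicharacter of $ \Z_n $ once $ \zeta $ has order $ 2n $. Realizing these ``extra'' forms genuinely requires the nontrivial $ \o $ above, and checking that the pair $ (\o, c) $ satisfies the pentagon and both hexagon identities is the one unavoidable cocycle computation. Granting the cyclic case, the additive decomposition upgrades $ \bar\tau $ to an isomorphism for every finite abelian group $ G $.
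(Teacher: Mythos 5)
The paper itself offers no proof of this statement: it is quoted as a classical theorem of Eilenberg and MacLane, with the proof delegated entirely to the cited works \cite{EMa}, \cite{EMb}. So your proposal has to stand on its own, and there it has a genuine gap. The formal parts are correct: the trace is a homomorphism on abelian $3$-cocycles; it kills coboundaries because $c(x,x)=h(x,x)/h(x,x)=1$; the splitting $Q(G\oplus G',\cx^{\times})\cong Q(G,\cx^{\times})\oplus Q(G',\cx^{\times})\oplus\Hom(G\otimes_{\Z}G',\cx^{\times})$ is correct and elementary; your count of $Q(\Z_n,\cx^{\times})$ (cyclic of order $n$ for $n$ odd, $2n$ for $n$ even) is right; and the explicit cyclic cocycle with $c(x^a,x^b)=\zeta^{ab}$ and the standard associator $\o(x^a,x^b,x^c)=\zeta^{\,na\lfloor (b+c)/n\rfloor}$ does realize a generator of $Q(\Z_n,\cx^{\times})$, including the even case where you correctly observe that a nontrivial associator is unavoidable. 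Granting your two structural claims, all of this yields that $\bar\tau$ is a well-defined, surjective homomorphism.

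The gap is injectivity, which is the actual content of the theorem, and it is concealed inside the two steps you assert rather than prove. First, the decomposition of $H^3_{ab}(G\oplus G',\cx^{\times})$ ``directly from the cochain description'' is not a routine manipulation: one must show that every abelian $3$-cocycle on $G\oplus G'$ can be modified by an abelian coboundary into one splitting as a product of pullbacks from the factors and a cross-term bicharacter, and this normal-form argument carries essentially the same weight as the theorem itself (it is exactly what Eilenberg and MacLane's computation of $H^4(K(G,2))$, via the bar construction and Künneth-type suspension arguments, accomplishes). Second, in the cyclic case your explicit cocycle gives only surjectivity; the phrase ``matching of orders then forces $\bar\tau$ to be an isomorphism'' presupposes the computation $|H^3_{ab}(\Z_n,\cx^{\times})|=n$ (resp.\ $2n$), which you never carry out, and carrying it out is equivalent to the missing injectivity statement: an abelian $3$-cocycle on $\Z_n$ whose trace is trivial is an abelian coboundary. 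Nothing in your argument rules out, for instance, a braiding compatible with an associator whose class in $H^3(\Z_n,\cx^{\times})\cong\Z_n$ differs from the one you exhibit, yet with the same trace. As written, the proposal establishes a surjective homomorphism $\bar\tau\colon H^3_{ab}(G,\cx^{\times})\to Q(G,\cx^{\times})$; the injectivity that makes this Eilenberg--MacLane's theorem is not proved.
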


We introduce the following notations before proceeding. Given a group homomorphism $ f: G \to G' $ and a positive integer $ n $, we use the standard notation for the $ n $-fold product of $ f $:
$$
f^n: G^n \to (G')^n,\  f^n(g_1, ..., g_n) := (f(g_1), ..., f(g_n)).
$$
For any $ n $-cochain $ \mu\in C^n(G', \cx^\times) $, we define $ f^\ast(\mu) := \mu \circ f^n $.

Two quadratic forms $q:G\to\mathbb{C}^{\times}$ and $q':G'\to\mathbb{C}^{\times}$ are \emph{equivalent} if there exists a group isomorphism $f:G\to G'$ such that $q =f^\ast(q')$.

\begin{lemma}\label{equivalent}
$\Vec_G^{(\o,c)}$ and $\Vec_{G'}^{(\o',c')}$ are equivalent braided monoidal categories if and only if the traces of $(\o,c)$ and $(\o',c')$ are equivalent quadratic forms.
\end{lemma}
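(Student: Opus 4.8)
The plan is to reduce the statement to the Eilenberg-MacLane theorem by translating braided monoidal equivalences into equalities of classes in $ H^3_{ab}(G, \cx^\times) $. Throughout I write $ q $ and $ q' $ for the traces of $ (\o, c) $ and $ (\o', c') $.

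First I would treat the ``only if'' direction. Suppose $ F\colon \Vec_G^{(\o,c)} \to \Vec_{G'}^{(\o',c')} $ is a braided monoidal equivalence. Both categories are pointed, and under $ \otimes $ their simple objects form the groups $ G $ and $ G' $ (each $ V_g $ is invertible with inverse $ V_{g^{-1}} $). A monoidal equivalence preserves tensor products, the unit, and invertibility, hence induces a group isomorphism $ f\colon G \to G' $ with $ F(V_g) \cong V'_{f(g)} $. After fixing these identifications, the monoidal constraint $ J_{x,y}\colon F(V_x) \otimes F(V_y) \to F(V_x \otimes V_y) $ is a scalar for each pair $ x, y \in G $, and these scalars assemble into a $ 2 $-cochain $ \mu \in C^2(G, \cx^\times) $. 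The coherence axiom making $ F $ monoidal translates precisely into $ \o = f^\ast(\o')\,\delta\mu $, and the compatibility of $ F $ with the braidings translates into $ c(x,y)/f^\ast(c')(x,y) = \mu(x,y)/\mu(y,x) $. Comparing with (\ref{EM3coboundary}), this says exactly that $ (\o, c) $ and $ f^\ast(\o', c') $ differ by the Eilenberg-MacLane $ 3 $-coboundary determined by $ \mu $, so they represent the same class in $ H^3_{ab}(G, \cx^\times) $. By the Eilenberg-MacLane theorem their traces agree; since the trace of $ f^\ast(\o', c') $ sends $ x \mapsto c'(f(x), f(x)) = q'(f(x)) = f^\ast(q')(x) $, we obtain $ q = f^\ast(q') $, i.e.\ $ q $ and $ q' $ are equivalent quadratic forms.

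Conversely, assume $ q $ and $ q' $ are equivalent via a group isomorphism $ f\colon G \to G' $ with $ q = f^\ast(q') $. By the computation above, the trace of $ f^\ast(\o', c') $ is $ f^\ast(q') = q $, which is also the trace of $ (\o, c) $. Since the Eilenberg-MacLane theorem asserts that the trace map is an isomorphism on $ H^3_{ab} $, it is in particular injective, so $ (\o, c) $ and $ f^\ast(\o', c') $ are cohomologous: there is a $ 2 $-cochain $ \mu \in C^2(G, \cx^\times) $ with $ \o = f^\ast(\o')\,\delta\mu $ and $ c(x,y)/f^\ast(c')(x,y) = \mu(x,y)/\mu(y,x) $. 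I would then define $ F\colon \Vec_G^{(\o,c)} \to \Vec_{G'}^{(\o',c')} $ on objects by $ V_g \mapsto V'_{f(g)} $ (extended additively), with monoidal constraint given by the scalars $ \mu(x,y) $, and check that these two identities are exactly the conditions needed for $ F $ to be a monoidal functor compatible with the braidings. As $ f $ is a bijection, $ F $ is an equivalence of categories, which finishes this direction.

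The main obstacle I anticipate is the bookkeeping in the ``only if'' direction: extracting $ f $ and $ \mu $ from the abstract equivalence and verifying that the monoidal and braided coherence diagrams for $ F $ are genuinely equivalent to the displayed cocycle and coboundary identities. Once this dictionary between braided monoidal equivalences of $ \Vec $-type categories and Eilenberg-MacLane coboundaries is established, everything else follows formally from the Eilenberg-MacLane theorem, with the normalized representatives chosen so that $ F(\unit) = \unit $ automatically since $ f(1) = 1 $.
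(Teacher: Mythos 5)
Your proposal is correct and follows essentially the same route as the paper: both directions translate a braided monoidal equivalence into a group isomorphism $f\colon G \to G'$ together with an Eilenberg-MacLane $3$-coboundary (the monoidal constraint scalars $\mu$), and then invoke the Eilenberg-MacLane theorem---well-definedness of the trace on cohomology for ``only if,'' injectivity for ``if.'' The only cosmetic difference is your convention $\o = f^\ast(\o')\,\delta\mu$ versus the paper's $f^\ast(\o') = \o\cdot\delta\mu$, which amounts to replacing $\mu$ by $\mu^{-1}$.
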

\begin{proof}
If $F:\Vec_G^{(\o,c)}\to\Vec_{G'}^{(\o',c')}$ is a braided monoidal equivalence with the natural isomorphism $\mu(x,y):F(V_x)\otimes F(V_y)\to F(V_x\otimes V_y)$, then $F$ induces a group isomorphism $f:G\to G'$ on simple objects.
Moreover, the following diagrams commute:
$$
\begin{CD}
(F(V_x)\otimes F(V_y))\otimes F(V_z)
	@>\mu(x,y)\otimes\id >>
F(V_x\otimes V_y)\otimes F(V_z)
	@>\mu(xy,z)>>
F((V_x\otimes V_y)\otimes V_z)
	\\
	@A\o'(f(x),f(y),f(z))AA    @.    @AA F(\o(x,y,z))A\\
F(V_x)\otimes (F(V_y)\otimes F(V_z))
	@> \id \otimes \mu(y,z)>>
F(V_x)\otimes F(V_y\otimes V_z)
	@>\mu(x,yz)>>
F(V_x\otimes(V_y\otimes V_z))
\end{CD}
$$

$$
\begin{CD}
F(V_x)\otimes F(V_y)
@>c'(f(x),f(y))>>
F(V_y)\otimes F(V_x)\\
@V\mu(x,y)VV @VV\mu(y,x)V\\
F(V_x\otimes V_y)
@> F(c(x,y))>>
F(V_y\otimes V_x)
\end{CD}
$$
Hence $f^*(\o')=\o \cdot \delta\mu$ and $f^*(c')(x,y)=c(x,y)\frac{\mu(x,y)}{\mu(y,x)}$. Therefore, $(\o,c)$ and $(f^*(\o'),f^*(c'))$ differ by an Eilenberg-MacLane 3-coboundary. By the theorem of Eilenberg-MacLane, $q=f^*(q')$.

Conversely, assume there exists a group isomorphism $f:G\to G'$ such that $q=f^*(q')$. By the theorem of Eilenberg-MacLane, $(\o,c)$ and $(f^*(\o'),f^*(c'))$ differ by an Eilenberg-MacLane 3-coboundary. In other words, there exists a 2-cochain $\mu$ of $G$ such that $f^*(\o')=\o \cdot \delta\mu$ and $f^*(c')(x,y)=c(x,y)\frac{\mu(x,y)}{\mu(y,x)}$.
Define $F(V_x):=V_{f(x)}$ and $\mu(x,y):F(V_x)\otimes F(V_y)\to F(V_x\otimes V_y)$, then $F$ together with $ \mu $ extends to a braided monoidal equivalence between $\Vec_G^{(\o,c)}$ and $\Vec_{G'}^{(\o',c')}$.
\end{proof}

\begin{remark}
In the light of the Eilenberg-MacLane Theorem, we will denote any representative in the braided monoidal equivalence class of some $\Vec_G^{(\o,c)}$ by $\cat{C}(G,q)$ where $q$ is the trace of $(\o,c)$. Then Lemma \ref{equivalent} can be rewritten as follows: $ \cat{C}(G, q) \cong \cat{C}(G', q')$ as braided monoidal categories if and only if $ q $ and $ q' $ are equivalent quadratic forms.
\end{remark}

\section{Classification of spherical fusion categories of Frobenius-Schur exponent 2}
In this section, we classify spherical fusion categories of Frobenius-Schur exponent 2 up to monoidal equivalence. Let $\mathcal{C}$ be such a category. The Frobenius-Schur exponent of $\DC{C}$ is then also 2 by Corollary 7.8 of \cite{NS}. Consequently, for any $ V \in \Ob(\cat{C}) $, $\nu_2(V)=\dim_{\cat{C}}(V)$. In addition, if $V$ is simple, then  $\nu_2(V)=0,\pm1$ (cf. Equation (\ref{eq:nu2})). By \cite[Theorem 2.3]{ENO}, $\dim_{\cat{C}}(V)\ne0$. Hence, we have
\begin{equation}\label{eq:pm1}
	\dim_{\cat{C}}(V)=\nu_2(V)=\pm1
\end{equation}
for any $ V \in \Pi_{\cat{C}} $. By Proposition 8.22 of \cite{ENO},
$$
(\text{FP}\dim(\mathcal{C}))^2=\frac{(\dim(\mathcal{C}))^2}{\dim_{\DC{C}}((V, \sigma_V))^2}
$$
for some $(V, \sigma_V)\in\Pi_{\DC{C}}$. Since $ (V, \sigma_V)\in\Pi_{\DC{C}}$ implies that $ V\in\Pi_{\cat{C}}$ \cite{Muger}, by Equations (\ref{eq:DimComparison}) and (\ref{eq:pm1}), we have $(\text{FP}\dim(\mathcal{C}))^2=(\dim(\mathcal{C}))^2$. As both $\text{FP}\dim(\mathcal{C})$ and $\dim(\mathcal{C})$ are positive \cite[Theorem 2.3]{ENO}, we have $\text{FP}\dim(\mathcal{C})=\dim(\mathcal{C})$. Hence, $\mathcal{C}$ is pseudo-unitary \cite{ENO}. By Proposition 8.23 of \cite{ENO}, there exists a unique spherical pivotal structure on $\mathcal{C}$ such that $\dim_{\cat{C}}(V)=\text{FP}\dim_{\cat{C}}(V)>0$ for all $V\in \Pi_{\mathcal{C}}$. Since our classification is up to monoidal equivalence, we can assume without loss of generality that $ \cat{C} $ is equipped with its unique spherical pivotal structure described above.

According to Equation (\ref{eq:pm1}), for any $ V\in\Pi_{\cat{C}} $, $ V $ is self-dual. As a result, we have
$$
\dim_{\cat{C}}(V \otimes V^{\ast}) = \dim_{\cat{C}}(V \otimes V) = \dim_{\cat{C}}(V)^2 = 1.
$$
By rigidity, pseudo-unitarity and the fact that categorical dimension is a character of the fusion ring, we have $ V \otimes V \cong \unit $. Therefore, $\Pi_{\mathcal{C}}$ is a group of exponent 2, or $\Pi_{\mathcal{C}}=\mathbb{Z}_2^n$ for some positive integer $ n $.
As a result, $\mathcal{C}=\Vec_{\mathbb{Z}_2^n}^{\o}$ for some $ \o\in H^3(\Z_2^n, \cx^{\times}) $. By Theorem 9.2 of \cite{NS}, for any finite group $ G $, we have
$$
\text{FS}\exp(\Vec_G^{\o})
=
\text{lcm}_{g\in G}\text{ord}(\o|_{\langle g\rangle})\text{ord}(g),
$$
where $ \o|_{\langle g\rangle} $ denotes the restriction of $ \o $ on the subgroup generated by $ g $. Since $\text{FS}\exp(\mathcal{C})=2$, we have $\o|_{\langle x\rangle}$ is trivial for all $x\in\mathbb{Z}_2^n$.

For any $ n \in \Z $, consider the map
\begin{align*}
b: H^3(\Z_2^n, \cx^\times)
&\longrightarrow
\{\pm 1\}^{2^{n}-1}
\\
\lambda
&\mapsto
(..., \lambda_C, ...)
\end{align*}
where $ C $ ranges over the subgroups of $ \Z_2^n $ of order 2, and
$$
\lambda_C =
\begin{cases}
1 &\ \mbox{if the restriction of } \lambda \mbox{ on } C \mbox{ is trivial,}\\
-1 &\ \mbox{otherwise.}
\end{cases}
$$

By \cite[Proposition 2.2]{Mason}, $ b $ is injective. Therefore, $ \o|_{\langle x \rangle} $ being trivial for all $ x \in \Z_2^n $ implies that $\o$ itself is cohomologous to the trivial 3-cocycle. Let $ [\o] $ be the cohomology class of $ \o $ in $ H^3(G, \cx^\times) $, we have $ [\o] = 1 $, and $ \Vec_{\mathbb{Z}_2^n}^{\o} $ is monoidally equivalent to $ \Vec_{\mathbb{Z}_2^n}^{1} $ by a standard argument. Note that the more familiar category of finite dimensional representations of $ \Z_2^n $, denoted by $ \Rep(\mathbb{Z}_2^n) $, is nothing but an incarnation of $\Vec_{\mathbb{Z}_2^n}^{1}$ as a fusion category.

We summarize the above discussion in the following theorem.

\begin{theorem}\label{thm:spherical}
If $\mathcal{C}$ is a spherical fusion category of Frobenius-Schur exponent 2, then $\mathcal{C}$ is pseudo-unitary. Moreover, $ \cat{C} $ is monoidally equivalent to $\Rep(\mathbb{Z}_2^n)$ for some positive integer $n$.
\qed
\end{theorem}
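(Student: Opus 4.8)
The plan is to first pin down the categorical dimensions of the simple objects, then to promote this numerical information into structural information about the fusion rules, and finally to identify the associativity data. Since $\FSexp(\cat{C}) = 2$, the defining property gives $\nu_2(V) = \dim_{\cat{C}}(V)$ for every $V \in \Ob(\cat{C})$. For simple $V$, Equation (\ref{eq:nu2}) restricts $\nu_2(V)$ to $\{0, \pm 1\}$, while \cite[Theorem 2.3]{ENO} guarantees $\dim_{\cat{C}}(V) \neq 0$; combining these, every simple object is self-dual with $\dim_{\cat{C}}(V) = \pm 1$. To upgrade this to pseudo-unitarity I would pass to the Drinfeld center: by the dimension comparison (\ref{eq:DimComparison}) and the fact that a simple object $(V, \sigma_V)$ of $\DC{C}$ has $V$ simple in $\cat{C}$, Proposition 8.22 of \cite{ENO} yields $(\FPdim(\cat{C}))^2 = (\dim(\cat{C}))^2$; positivity of both quantities then forces equality, so $\cat{C}$ is pseudo-unitary. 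I would then invoke Proposition 8.23 of \cite{ENO} to replace the given spherical structure by the canonical one, under which $\dim_{\cat{C}}(V) = \FPdim_{\cat{C}}(V) > 0$; since these dimensions are already $\pm 1$, they are all equal to $1$. This replacement is harmless, as the classification is only up to monoidal equivalence.

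Next I would extract the fusion rules. With every simple object self-dual of dimension $1$, the object $V \otimes V \cong V \otimes V^\ast$ has dimension $1$ and contains $\unit$ as a summand; since all categorical dimensions here are positive integers, this forces $V \otimes V \cong \unit$. Hence every simple object is invertible of order at most $2$, so $\Pi_{\cat{C}}$ is a finite group of exponent $2$, and is therefore abelian and isomorphic to $\Z_2^n$ for some positive integer $n$. In particular $\cat{C}$ is pointed, so $\cat{C} \cong \Vec_{\Z_2^n}^{\o}$ for some class $[\o] \in H^3(\Z_2^n, \cx^\times)$. To constrain $\o$, I would apply \cite[Theorem 9.2]{NS}, namely $\FSexp(\Vec_G^{\o}) = \operatorname{lcm}_{g \in G} \operatorname{ord}(\o|_{\langle g\rangle})\operatorname{ord}(g)$; since $\operatorname{ord}(g) = 2$ for every non-identity $g$ and $\FSexp(\cat{C}) = 2$, each restriction $\o|_{\langle g\rangle}$ must be cohomologically trivial.

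The remaining task is to deduce $[\o] = 1$ from the vanishing of all its restrictions to the order-$2$ subgroups, and this is the step I expect to be the genuine obstacle: the group $H^3(\Z_2^n, \cx^\times)$ is large and is \emph{a priori} not detected by restriction to cyclic subgroups alone. The clean way through is the injectivity of the detection map $b$ recorded above (from \cite[Proposition 2.2]{Mason}), which sends a class to the tuple of triviality indicators of its restrictions to the $2^n - 1$ subgroups of order $2$; triviality of every restriction means $b([\o]) = b(1)$, and injectivity gives $[\o] = 1$. A standard argument then produces a monoidal equivalence $\Vec_{\Z_2^n}^{\o} \cong \Vec_{\Z_2^n}^{1} = \Rep(\Z_2^n)$, completing the proof. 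Were Mason's injectivity unavailable, one would instead have to analyze the restriction map on $H^3(\Z_2^n, \cx^\times)$ by hand, which is the only part of the argument that is not essentially formal.
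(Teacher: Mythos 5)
Your proposal is correct and follows essentially the same route as the paper's own proof: the identical chain of dimension arguments (Equation (\ref{eq:nu2}), \cite[Theorem 2.3, Propositions 8.22 and 8.23]{ENO}) establishing pseudo-unitarity and $\Pi_{\cat{C}} \cong \Z_2^n$, followed by \cite[Theorem 9.2]{NS} to trivialize the restrictions $\o|_{\langle g \rangle}$ and Mason's injectivity result \cite[Proposition 2.2]{Mason} to conclude $[\o] = 1$. The only cosmetic difference is that you derive $\nu_2(V) = \dim_{\cat{C}}(V)$ directly from the definition of $\FSexp$ rather than routing through the Drinfeld center via \cite[Corollary 7.8]{NS}, which changes nothing of substance.
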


\begin{remark}
We can also obtain this result by the explicit formula of the normalized 3-cocycle \cite{HWY},
$$\o(x,y,z)=\prod_{r=1}^n(-1)^{a_ri_r[\frac{j_r+k_r}{2}]}\prod_{1\le r<s\le n}(-1)^{a_{rs}k_r[\frac{i_s+j_s}{2}]}\prod_{1\le r<s<t\le n}(-1)^{a_{rst}k_rj_si_t}$$
where $x=(i_1,\dots,i_n)$, $y=(j_1,\dots,j_n)$, $z=(k_1,\dots,k_n)$, $i_r,j_r,k_r,a_r,a_{rs},a_{rst}\in\{0,1\}$.

$$\o(x,x,x)=\prod_{r=1}^n(-1)^{a_ri_r^2}\prod_{1\le r<s\le n}(-1)^{a_{rs}i_ri_s}\prod_{1\le r<s<t\le n}(-1)^{a_{rst}i_ri_si_t}=1.$$
Take $x=(0,\dots,0,1,0,\dots,0)$ where 1 is at the $r$-th position, we get $a_r=0$ for $1\le r\le n$.
Take $x=(0,\dots,0,1,0,\dots,0,1,0,\dots,0)$ where the first 1 is at the $r$-th position, the second 1 is at the $s$-th position, we get $a_{rs}=0$ for $1\le r<s\le n$.
Take $x=(0,\dots,0,1,0,\dots,0,1,0,\dots,0,1,0,\dots,0)$ where the first 1 is at the $r$-th position, the second 1 is at the $s$-th position, the third 1 is at the $t$-th position, we get $a_{rst}=0$ for $1\le r<s<t\le n$. Hence $[\o] = 1$.
\end{remark}

\section{Classification of modular categories of Frobenius-Schur exponent 2}
In this section, we use the result in the previous section to classify modular categories of Frobenius-Schur exponent 2 up to braided monoidal equivalence. Let $\mathcal{C}$ be such a modular category. By the same argument as in the previous section, $ \cat{C} $ is pseudo-unitary, and we will equip $ \cat{C} $ with its canonical spherical pivotal structure such that $ \dim_{\cat{C}}(V) = \FPdim_{\cat{C}}(V)>0 $ for all $ V \in \Pi_{\cat{C}} $. According to Theorem \ref{thm:spherical}, $ \cat{C} $ is equivalent to $ \Rep(\Z_2^n) $ as a fusion category for some $ n $. Consequently, as a braided fusion category, $ \cat{C} \cong \Vec_{\mathbb{Z}_2^n}^{(\o, c)} $ for some Eilenberg-MacLane 3-cocycle $ (\o, c) $. By the same argument as in the previous section, $ [\o] = 1 $.

Therefore, $ \cat{C} \cong \Vec_{\Z_2^n}^{(1, c)}  $ with $ (1, c) $ an Eilenberg-MacLane 3-cocycle. By Equation \eqref{EM3cocycle}, we have $c(1,x)=c(x,1)=1$, and $q(x)^2=c(x,x)^2=1$ for all $x\in\mathbb{Z}_2^n$, in particular, $ q$ takes value in $\{\pm 1\} $. Therefore, by definition (cf. Section \ref{sec:gwc}), the bilinear form associated to $ q $ is given by 
$$
b_q: \Z_2^n \oplus \Z_2^n \to \{\pm 1 \},
\ 
b_q(x, y) = \frac{q(xy)}{q(x)q(y)}=c(x,y)c(y,x)
$$
for any $ (x, y)\in \Z_2^n \oplus\Z_2^n $. Moreover, since $b_q(x, y)$ is the entry of the S-matrix of $\mathcal{C}$ \cite{EGNO}, the modularity of $ \cat{C} $ then implies that $q$ is a non-degenerate quadratic form. Hence, $ b_q $ is a non-degenerate alternating form (in particular, $ b_q(x,x) = 1 $ for any $ x \in \Z_2^n $). Therefore, $ n=2m $ is even. Moreover, there exists a symplectic basis $ \{e_1, ..., e_m, f_1, ..., f_m\} $ of $ \Z_2^{2m} $, with respect to which $ b_q(e_j, e_k) = b_q(f_j, f_k) = 1 $, and $ b_q(e_j, f_k) = (-1)^{\delta_{j,k}} $ for any $ j, k = 1, ..., m $.

For any non-degenerate quadratic form $ q: \Z_2^{2m} \to \{\pm 1 \} $, we define its additive version $ Q: \Z_2^{2m} \to \Z_2$ such that $ (-1)^{Q(x)} = q(x) $ for any $ x \in \Z_2^{2m} $. Then the \emph{Arf invariant} of $q$, denoted by $ \Arf(q) $, is given by the classical Arf invariant of $ Q $. More precisely, we have
$$
\Arf(q)
:=
\Arf(Q)
=
\sum_{j = 1}^{m} Q(e_j)Q(f_j),
$$
where $ \{ e_1, ..., e_m, f_1, ..., f_m \} $ is the symplectic basis given above. Note that the Arf invariant takes value in $ \Z_2 $, where we use the standard notation $ \Z_2 = \{ 0, 1 \} $. We also view $ \Z_2 $ as a field here.

Arf showed in \cite{Arf} that the Arf invariant is independent of the choice of basis, and is additive with respect to the direct sum of quadratic forms. More importantly, Arf showed that the dimension $ 2m $ (of $ \Z_2^{2m} $ as a vector space over $ \Z_2 $) and the Arf invariant $ \Arf(q) $ completely determine the equivalence class of a non-degenerate quadratic form $ (\Z_2^{2m}, q) $ over $ \Z_2 $. The readers, especially those who are not fluent in German, are highly recommended to consult Appendix 1 of \cite{MilnorHusemoller} for a beautiful exposition of Arf invariant.

As a consequence of Arf's theorems, for any positive integer $ m $, there are only two equivalence classes of non-degenerate quadratic forms on $ \Z_2^{2m} $, and they can be obtained as direct sums from two inequivalent quadratic forms on $\Z_2^2$. We give explicit representatives for the two equivalence classes of non-degenerate quadratic forms on $ \Z_2^2 $ as follows:

\begin{equation}\label{eq:q1}
q_1: \Z_2^2\to\{\pm 1\},\  q_1(x,y)= (-1)^{xy}
\end{equation}
and

\begin{equation}\label{eq:q2}
q_2: \Z_2^2\to \{\pm 1\},\ q_2(x,y)= (-1)^{x^2+xy+y^2}
\end{equation}
for any $ x, y \in\Z_2 $. In other words, we have $ Q_1(x, y) = xy $ and $ Q_2(x, y) = x^2+xy+y^2 $. Therefore, any quadratic form $ (\Z_2^{2m}, q) $ is equivalent to $ q_1^a \oplus q_2^{m-a}$ for some $ a \geq 0 $. The presentation of $ q $ may not be unique, but they are all equivalent to the representatives given as follows.

By direct computation, we have $ \Arf(q_1) = 0 $, $ \Arf(q_2) = 1 $. Therefore, $ \Arf(q_1\oplus q_1) = \Arf(q_2 \oplus q_2) = 0 $. Since both $ q_1\oplus q_1 $ and $ q_2\oplus q_2 $ are quadratic forms on $ \Z_2^4 $, by Arf's theorem, $ q_1\oplus q_1 $ is equivalent to $ q_2\oplus q_2 $. As a result, if a non-degenerate quadratic form $ (\Z_2^{2m}, q) $ is equivalent to $ q_1^{a} \oplus q_2^{m-a}$ for some $ a \geq 0 $, then its Arf invariant is given by
$$
\Arf(q) =
\begin{cases}
0, \mbox{ if } m-a \mbox{ is even},\\
1, \mbox{ otherwise}.
\end{cases}
$$
by the additivity of the Arf invariant. Now that we can change any summand of the form $ q_2 \oplus q_2 $ into $ q_1 \oplus q_1 $ without changing the equivalence class of $ q $, we have $ q $ is equivalent to $ q_1^m$ if $ \Arf(q) = 0 $, and $ q $ is equivalent to $ q_1^{m-1} \oplus q_2 $ if $ \Arf(q) = 1 $. We will assume for the rest of this article, that any non-degenerate quadratic form $ (\Z_2^{2m}, q) $ is represented in this way.

Next, we analyze the categorical interpretation of the direct sum of quadratic forms (c.f. Section \ref{sec:gwc}). Let $ (G, q) $ and $ (G', q') $ be two non-degenerate quadratic forms. We consider the Deligne tensor product of the modular categories $ \cat{C}(G, q) $ and $ \cat{C}(G', q') $, denoted by $\cat{D} := \cat{C}(G, q) \boxtimes \cat{C}(G', q')$ \cite{EGNO}. By definition, $ \cat{D} $ is also a modular category, and its fusion rule is given by the multiplication of the abelian group $ G \oplus G' $. Therefore, $ \Pi_{\cat{D}} = G \oplus G'$, hence $ \cat{D} \cong \Vec_{G \oplus G'}^{(\o, c)} $ for some Eilenberg-MacLane 3-cocycle $ (\o, c) $ of $ G \oplus G' $. Let $ p(x) = c(x, x) $ be the corresponding trace. In other words, $ \cat{D} \cong \cat{C}(G \oplus G', p) $.

Let $ (\o_1, c_1) $ and $ (\o_2, c_2) $ be representatives of the Eilenberg-MacLane 3-cohomology classes corresponding to $ q $ and $ q' $ respectively. By the definition of the Deligne tensor product, the associativity constraints in $ \cat{D} $ is the tensor product of those in $ \cat{C}(G, q) $ and $ \cat{C}(G', q') $. In other words, for any $ (x_1, x_2), (y_1, y_2), (z_1, z_2) \in G \oplus G' $, we have
$$
\o((x_1, x_2), (y_1, y_2), (z_1, z_2))
=
\o_1(x_1,y_1,z_1)\o_2(x_2,y_2,z_2).
$$
Similarly, we have the following equality from the definition of the braiding on $ \cat{D} $
$$
c((x_1,x_2),(y_1,y_2))=c_1(x_1,y_1)c_2(x_2,y_2).
$$
In particular, for any $ (x_1, x_2)\in G \oplus G' $, we have
$$
p(x_1, x_2) = q(x_1)q'(x_2) = (q\oplus q')(x_1, x_2).
$$
Therefore, by Lemma \ref{equivalent}, we have $ \cat{D} \cong \cat{C}(G \oplus G', p) \cong \cat{C}(G\oplus G', q \oplus q') $.

We summarize the above discussion in the following lemma.

\begin{lemma}
$\mathcal{C}(G \oplus G', q \oplus q') \cong \mathcal{C}(G, q) \boxtimes \mathcal{C}(G', q')$ as modular categories.
\qed
\end{lemma}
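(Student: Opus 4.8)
The plan is to reduce the claim to an application of Lemma~\ref{equivalent} by identifying the trace quadratic form of the Deligne tensor product $\cat{D} := \cat{C}(G, q) \boxtimes \cat{C}(G', q')$. Since braided monoidal equivalence of categories of the form $\Vec_{H}^{(\o,c)}$ is detected entirely by the equivalence class of the trace (the Remark following Lemma~\ref{equivalent}), it suffices to exhibit $\cat{D}$ as braided monoidally equivalent to some $\Vec_{G\oplus G'}^{(\o,c)}$ whose trace is $q\oplus q'$.

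First I would check that $\cat{D}$ is itself pointed with the expected fusion rules. The Deligne tensor product of two fusion categories is again fusion, and its simple objects are exactly the products $X\boxtimes Y$ of simples $X\in\Pi_{\cat{C}(G,q)}$ and $Y\in\Pi_{\cat{C}(G',q')}$; since all of these are invertible, so is each $X\boxtimes Y$. The tensor product of such simples is computed componentwise, so the fusion rules of $\cat{D}$ are precisely those of the group $G\oplus G'$. Consequently $\Pi_{\cat{D}} = G\oplus G'$ and $\cat{D}\cong\Vec_{G\oplus G'}^{(\o,c)}$ for some Eilenberg--MacLane $3$-cocycle $(\o,c)$. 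Moreover, the Deligne tensor product of modular categories is again modular, so $\cat{D}$ has the form $\cat{C}(G\oplus G', p)$ with $p(x) = c(x,x)$ its trace.

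The key step is then to compute $p$ in terms of $q$ and $q'$. Fixing representatives $(\o_1,c_1)$ and $(\o_2,c_2)$ with traces $q$ and $q'$, I would use the defining property of the Deligne tensor product that its associativity and braiding constraints are the componentwise tensor products of those on the two factors. On the invertible simple objects this yields
$$
\o\bigl((x_1,x_2),(y_1,y_2),(z_1,z_2)\bigr)=\o_1(x_1,y_1,z_1)\,\o_2(x_2,y_2,z_2),\quad
c\bigl((x_1,x_2),(y_1,y_2)\bigr)=c_1(x_1,y_1)\,c_2(x_2,y_2).
$$
Setting $(y_1,y_2) = (x_1,x_2)$ in the braiding identity gives $p(x_1,x_2) = c_1(x_1,x_1)\,c_2(x_2,x_2) = q(x_1)\,q'(x_2) = (q\oplus q')(x_1,x_2)$. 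Hence $\cat{D} \cong \cat{C}(G\oplus G', p) \cong \cat{C}(G\oplus G', q\oplus q')$ by Lemma~\ref{equivalent}.

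The main obstacle is the bookkeeping behind the displayed identities: one must justify that the associator and braiding of $\cat{D}$ restrict, on the subcategory of invertible objects, to the literal products of scalars $\o_1\o_2$ and $c_1c_2$. This is where the explicit construction of the Deligne tensor product for semisimple categories must be invoked, tracking the coherence isomorphisms $(X\boxtimes Y)\otimes(X'\boxtimes Y') \cong (X\otimes X')\boxtimes(Y\otimes Y')$ and checking their compatibility with the factored associativity and braiding data. Once this identification of the Eilenberg--MacLane $3$-cocycle is in place, the evaluation of the trace and the appeal to Lemma~\ref{equivalent} are immediate.
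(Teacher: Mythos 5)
Your proposal is correct and follows essentially the same route as the paper: identify the Deligne tensor product as $\Vec_{G\oplus G'}^{(\o,c)}$ via its fusion rules, observe that the associator and braiding factor componentwise so the trace is $q\oplus q'$, and conclude with Lemma~\ref{equivalent}. The only difference is that you flag the coherence bookkeeping for the componentwise factorization as a point needing justification, which the paper likewise takes as immediate from the definition of the Deligne tensor product.
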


Combining the discussions in this section gives rise to the following classification result.

\begin{theorem}\label{thm:modular}
If $\mathcal{C}$ is a modular category of Frobenius-Schur exponent 2, then $\mathcal{C}$ is pseudo-unitary, and $ \cat{C} $ is braided monoidally equivalent to $\mathcal{C}(\Z_2^{2m},q)$ for a positive integer $ m $ and a non-degenerate quadratic form $ q $. Moreover, we have the following Deligne tensor product decomposition
$$
\cat{C} \cong
\begin{cases}
\cat{C}(\Z_2^2, q_1)^{\boxtimes m}
&\mbox{if } \Arf(q) = 0,\\
&\\
\cat{C}(\Z_2^2, q_1)^{\boxtimes (m-1)} \boxtimes \cat{C}(\Z_2^2, q_2)
&\mbox{if } \Arf(q) = 1,\\
\end{cases}
$$
where $ q_1 $ and $q_2$ are given in Equations (\ref{eq:q1}) and (\ref{eq:q2}).
\qed
\end{theorem}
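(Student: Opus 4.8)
The plan is to reduce the classification to the theory of quadratic forms over $\Z_2$, where Arf's theorem supplies the complete invariant, and then to translate back into a statement about Deligne tensor products. First I would argue, exactly as in Section 3, that $\cat{C}$ is pseudo-unitary and carries a canonical spherical structure with $\dim_{\cat{C}}(V) = \FPdim_{\cat{C}}(V) > 0$ on simples; by Theorem \ref{thm:spherical} this makes $\cat{C}$ equivalent to $\Rep(\Z_2^n)$ as a fusion category, so as a braided fusion category $\cat{C} \cong \Vec_{\Z_2^n}^{(\o, c)}$ for some Eilenberg-MacLane 3-cocycle $(\o, c)$. Since $\FSexp(\cat{C}) = 2$, the same reasoning yields $[\o] = 1$, and the Eilenberg-MacLane theorem lets me replace $\cat{C}$ by $\cat{C}(\Z_2^n, q)$, where $q(x) = c(x, x)$ is the trace; the hexagon axioms force $q$ to take values in $\{\pm 1\}$.

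Next I would invoke modularity. The crucial observation is that the bicharacter $b_q(x, y) = c(x, y)c(y, x)$ coincides with the $(x, y)$-entry of the S-matrix of $\cat{C}$, so non-degeneracy of the S-matrix is equivalent to non-degeneracy of $b_q$. Because $q$ is $\{\pm 1\}$-valued and $b_q(x, x) = 1$ for all $x$, the form $b_q$ is in fact a non-degenerate alternating form over $\Z_2$; a non-degenerate alternating form forces the underlying $\Z_2$-space to have even dimension, hence $n = 2m$, and furnishes a symplectic basis. This step — recognizing that a modular structure on a pointed Frobenius-Schur exponent 2 category is precisely a non-degenerate quadratic form over $\Z_2$ — is the conceptual linchpin of the whole argument, and I expect it to be the main obstacle: getting the identification of the braiding data with the S-matrix right is exactly what makes Arf's classification applicable.

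With $(\Z_2^{2m}, q)$ non-degenerate, I would apply Arf's theorem, according to which $2m$ together with $\Arf(q) \in \Z_2$ is a complete invariant. Using $\Arf(q_1) = 0$, $\Arf(q_2) = 1$, and the additivity of the Arf invariant, any presentation $q \cong q_1^a \oplus q_2^{m-a}$ can be normalized: the one piece of bookkeeping is that $q_1 \oplus q_1$ and $q_2 \oplus q_2$ are both non-degenerate forms on $\Z_2^4$ with vanishing Arf invariant, hence equivalent by Arf's theorem, which lets me absorb pairs of $q_2$-summands into $q_1$-summands. This leaves $q \cong q_1^m$ when $\Arf(q) = 0$ and $q \cong q_1^{m-1} \oplus q_2$ when $\Arf(q) = 1$.

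Finally I would pass from this direct-sum decomposition of quadratic forms to the categorical decomposition using the preceding lemma on Deligne tensor products, which identifies $\cat{C}(G \oplus G', q \oplus q')$ with $\cat{C}(G, q) \boxtimes \cat{C}(G', q')$. Applying it inductively to the normalized $q$, together with Lemma \ref{equivalent} (braided monoidal equivalence is detected by equivalence of traces), yields the claimed decomposition into copies of $\cat{C}(\Z_2^2, q_1)$ and at most one copy of $\cat{C}(\Z_2^2, q_2)$, completing the proof.
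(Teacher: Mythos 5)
Your proposal is correct and follows essentially the same route as the paper: pseudo-unitarity and Theorem \ref{thm:spherical} to get $\Vec_{\Z_2^n}^{(1,c)}$, identification of $b_q$ with the S-matrix to get a non-degenerate alternating form (hence $n=2m$ and a symplectic basis), Arf's theorem with the $q_1\oplus q_1 \cong q_2\oplus q_2$ normalization, and finally the Deligne tensor product lemma together with Lemma \ref{equivalent}. No gaps; this is the paper's own argument.
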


\begin{remark}
A braiding of $\cat{C}(\Z_2^2, q_1)$ can be given by
$$
c_1((x,y),(a,b))=(-1)^{xb},
$$
and a braiding of $\cat{C}(\Z_2^2, q_2)$ can be given by
$$
c_2((x,y),(a,b))=(-1)^{xa+yb+ay}.
$$
\end{remark}

We would like to interpret the Arf invariant in the modular category setting. Firstly, note that for any non-degenerate quadratic form $ (\Z_2^{2m}, q) $, by direct computation, we have
$$
(-1)^{\Arf(q)}
=
\frac{1}{\sqrt{|\Z_2^{2m}|}}\sum_{x\in \Z_{2}^{2m}} q(x)
=
\frac{1}{2^{m}}\sum_{x\in \Z_{2}^{2m}} q(x)
$$
(by Arf's theorems, we only have to check this equality for $ (\Z_2^2, q_1) $ and$ (\Z_2^2, q_2) $, which is immediate). In the literature, the above quantity is also referred to as the \emph{Gaussian sum} for the quadratic form $ q $ on the finite abelian group $ \Z_2^{2m} $ (for example, see \cite{Scharlau}).

On the category-theoretical side, recall (for example, \cite{EGNO}) that the \emph{positive Gauss sum} of a modular category $\cat{C}$ is defined by
$$
\tau_+
:=
\sum_{X\in \Pi_{\cat{C}}}\theta_X\dim_{\cat{C}}(X)^2,
$$
where $ \theta_X $ is the twist of the simple object $ X $. It is standard \cite{BK} that in a modular category $ \cat{C} $, the global dimension is the square of the complex absolute value of $ \tau_+ $. In other words,
$$
\dim(\cat{C}) = |\tau_+|^2.
$$
The \emph{multiplicative central charge} of $\cat{C}$ is defined by
$$
\xi
:=
\frac{\tau_+(\cat{C})}{\sqrt{\dim(\cat{C})}}
=
\frac{\tau_+}{|\tau_+|}.
$$
Note that $ \xi(\cat{C}) $ is well-defined as $ \dim(\cat{C})$ is a totally positive algebraic integer \cite{EGNO}.

In particular, when $ \cat{C} = \cat{C}(\Z_2^{2m}, q) $ for a non-degenerate quadratic form $ (\Z_2^{2m}, q)$, we can compute the dimension $ m $ and the Arf invariant $ \Arf(q) $ of $ (\Z_2^{2m}, q) $ by the positive Gauss sum $ \tau_+ $ as follows. We have $ \Pi_{\cat{C}} = \Z_2^{2m} $. We also have, for any $ x \in \Z_2^{2m} $, that $ \dim_{\cat{C}}(x) = 1 $, hence
\begin{equation}\label{eq:dim}
|\tau_+|^2
=
\dim(\cat{C})
=
\sum_{x\in \Z_{2}^{2m}}\dim_{\cat{C}}(x)^2
=
|\Z_{2}^{2m}|
=
2^{2m},
\end{equation}
in particular, $ |\tau_+| = 2^{m} $, or $ m = \log_2(|\tau_+|) $. Moreover, since for any $ x \in \Z_2^{2m} $, $ \theta_x = q(x) $ \cite{EGNO}, we have
\begin{equation}\label{eq:charge}
\frac{\tau_+}{2^{m}}
=
\frac{\tau_+}{|\tau_+|}
=
\xi(\cat{C}(\Z^{2m}, q))
=
\frac{1}{\sqrt{|\Z_{2}^{2m}|}}\sum_{x\in \Z_{2}^{2m}} q(x)
=
(-1)^{\Arf(q)}.
\end{equation}
Hence, $ \Arf(q) $ is 0 or 1 depending on whether $ \tau_+ $ is positive or negative, respectively.

Conversely, by Equations (\ref{eq:dim}) and (\ref{eq:charge}), we have $ \tau_+ = (-1)^{\Arf(q)}2^m $.

The argument above shows that both the dimension and the Arf invariant of the quadratic form $ (\Z_2^{2m}, q) $ are completely determined by positive Gauss sum $ \tau_+ $ of the modular category $ \cat{C}(\Z_2^{2m}, q) $ and vice versa.

Recall that by Arf, a non-degenerate quadratic form is completely determined (up to equivalence) by its dimension and its Arf invariant. In the same vein, we restate Theorem \ref{thm:modular} as a categorical analog of Arf's theorem.

\begin{theorem}\label{thm:ArfInCategory}
	If $\mathcal{C}$ is a modular category of Frobenius-Schur exponent 2, then $\mathcal{C}$ is pseudo-unitary, and $ \cat{C} $ is completely determined, up to braided monoidal equivalence, by its positive Gauss sum $ \tau_+ $. More precisely, we have
	
	$$
	\cat{C} \cong
	\begin{cases}
	\cat{C}(\Z_2^2, q_1)^{\boxtimes \log_2(|\tau_+|)}
	&\mbox{if } \tau_+ > 0,\\
	&\\
	\cat{C}(\Z_2^2, q_1)^{\boxtimes (\log_2(|\tau_+|)-1)} \boxtimes \cat{C}(\Z_2^2, q_2)
	&\mbox{if } \tau_+ < 0.\\
	\end{cases}
	$$
	\qed
\end{theorem}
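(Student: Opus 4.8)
The plan is to recognize this theorem as a direct reformulation of Theorem \ref{thm:modular}, obtained by trading the pair of invariants $(m, \Arf(q))$ for the single number $\tau_+$. First I would invoke Theorem \ref{thm:modular} to reduce to the case $\cat{C} \cong \cat{C}(\Z_2^{2m}, q)$ for a positive integer $m$ and a non-degenerate quadratic form $q$; this step simultaneously supplies the pseudo-unitarity assertion and the Deligne product decomposition indexed by $\Arf(q)$. Everything that remains is bookkeeping translating $(m,\Arf(q))$ into $\tau_+$.

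The heart of the matter is therefore to express both $m$ and $\Arf(q)$ purely in terms of $\tau_+$. For the dimension, Equation \eqref{eq:dim} gives $|\tau_+|^2 = \dim(\cat{C}) = 2^{2m}$, whence $|\tau_+| = 2^m$ and $m = \log_2(|\tau_+|)$; note in particular that $|\tau_+|$ is automatically a power of $2$, so the exponent appearing in the statement is a bona fide non-negative integer. For the Arf invariant, combining Equations \eqref{eq:dim} and \eqref{eq:charge} yields $\tau_+ = (-1)^{\Arf(q)} 2^m$. Since $2^m > 0$, this identity is equivalent to the dichotomy $\tau_+ > 0 \iff \Arf(q) = 0$ and $\tau_+ < 0 \iff \Arf(q) = 1$, with $\tau_+$ never vanishing.

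I would then simply rewrite the two branches of Theorem \ref{thm:modular}: replace every occurrence of $m$ by $\log_2(|\tau_+|)$, and replace the conditions $\Arf(q) = 0$ and $\Arf(q) = 1$ by $\tau_+ > 0$ and $\tau_+ < 0$ respectively. This produces verbatim the displayed case formula. For the assertion that $\tau_+$ is a \emph{complete} invariant, the point is that the single scalar $\tau_+$ recovers the pair $(m, \Arf(q))$ through the two formulas above, and by Arf's theorem (applied via Theorem \ref{thm:modular}) this pair determines the braided monoidal equivalence class of $\cat{C}$; hence any two such categories sharing the same positive Gauss sum are braided monoidally equivalent.

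I do not anticipate a genuine obstacle, since all of the substantive work—the identification of the $S$-matrix entries and twists with $b_q$ and $q$, the Arf classification of non-degenerate quadratic forms over $\Z_2$, and the evaluation of the Gaussian sum in Equation \eqref{eq:charge}—has already been carried out in the preceding paragraphs. The only point demanding a moment's care is verifying that the case split on the sign of $\tau_+$ is exhaustive, i.e. that $\tau_+ \neq 0$; but this is immediate from $\tau_+ = (-1)^{\Arf(q)} 2^m$.
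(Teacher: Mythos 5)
Your proposal is correct and follows exactly the paper's own route: reduce to $\cat{C}(\Z_2^{2m},q)$ via Theorem \ref{thm:modular}, then use Equations \eqref{eq:dim} and \eqref{eq:charge} to translate the pair $(m,\Arf(q))$ into $\tau_+ = (-1)^{\Arf(q)}2^m$ and rewrite the two branches. Your added observation that $\tau_+ \neq 0$ makes the sign dichotomy exhaustive is a nice explicit touch, but the argument is the same as the paper's.
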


Finally, we make a remark on the prime factorization of modular categories.

A modular category is \emph{non-trivial} if its rank is larger than 1. A non-trivial modular category called a \emph{prime modular category} if it is not braided monoidally equivalent to a Deligne tensor product of two non-trivial modular categories. 

A direct consequence of Theorem \ref{thm:modular} is that there are only two (pseudo-unitary) prime modular categories of Frobenius-Schur exponent 2. In view of \cite[Lemma 2.4]{BNRW-Rank-finiteness}, there are finitely many prime modular categories of Frobenius-Schur exponent 2.

\section{Acknowledgments}
We thank Professor Siu-Hung Ng for helpful conversations. ZW gratefully acknowledges the support from NSFC grants 11431010, 11571329.

\bibliographystyle{abbrv}
\bibliography{FSexp2}

\end{document}